\newtheorem{thm}[equation]{Theorem}
\newtheorem{lem}[equation]{Lemma}
\newtheorem{prop}[equation]{Proposition}
\newtheorem{cor}[equation]{Corollary}
\newtheorem{rem}[equation]{Remark}
\numberwithin{equation}{section}
\DeclareRobustCommand{\rchi}{{\mathpalette\irchi\relax}}
\newcommand{\irchi}[2]{\raisebox{\depth}{$#1\chi$}}
\newcommand{\ra}{\rightarrow}
\newcommand{\R}{\mathbb{R}}
\newcommand{\RL}{\mathbb{R}}
\begin{document}
\title[Deviation inequalities for convex functions]{Deviation inequalities for convex functions motivated by the Talagrand  conjecture}
\author{Nathael Gozlan, Mokshay Madiman, Cyril Roberto, Paul-Marie Samson}

\thanks{Supported by the grants ANR 2011 BS01 007 01, ANR 10 LABX-58, ANR11-LBX-0023-01}

\address{Universit\'e Paris Descartes - MAP 5 (UMR CNRS 8145), 45 rue des Saints-Pères 75270 
Paris cedex 6, France.}
\address{University of Delaware, Department of Mathematical Sciences, 501 Ewing Hall, Newark DE 19716, USA.}
\address{Universit\'e Paris Ouest Nanterre La D\'efense - Modal'X, 200 avenue de la R\'epublique 92000 Nanterre, France}
\address{Universit\'e Paris Est Marne la Vall\'ee - Laboratoire d'Analyse et de Math\'e\-matiques Appliqu\'ees (UMR CNRS 8050), 5 bd Descartes, 77454 Marne la Vall\'ee Cedex 2, France}

\email{natael.gozlan@parisdescartes.fr, madiman@udel.edu, croberto@math.cnrs.fr, paul-marie.samson@u-pem.fr}
\keywords{Ehrhard inequality, Talagrand conjecture, Hypercontractivity, Ornstein-Uhlenbeck semi-group}
\subjclass{60E15, 32F32 and 26D10}

\date{\today}

\maketitle

\begin{abstract}
Motivated by Talagrand's conjecture on regularization properties of the natural semigroup on the Boolean hypercube,
and in particular its continuous analogue involving regularization properties of the Ornstein-Uhlenbeck semigroup acting on integrable functions,
we explore deviation inequalities for log-semiconvex functions under Gaussian measure.
\end{abstract}


\section{Introduction}
In the late eighties, Talagrand conjectured that the ``convolution by a biased coin'', on the hypercube $\{-1,1\}^n$, satisfies some refined hypercontractivity property. We refer to Problems 1 and 2 in \cite{Tal89:1} for precise statements. 
A continuous version of Talagrand's conjecture for the Ornstein-Uhlenbeck operator has recently attracted some attention \cite{BBBOW13, EL14:2, Leh16}; 
in particular, it was resolved by  \cite{EL14:2, Leh16} by first proving a deviation inequality for log-semiconvex functions above their means under Gaussian measure.
In this paper, we discuss a simpler approach to proving this deviation inequality for the special case of log-convex functions (which is already of interest).

\medskip

Let us start by presenting the continuous version of Talagrand's conjecture and the history of its resolution.
Denote by $\gamma_n$ the standard Gaussian (probability) measure in dimension $n$, with density 
\[
x \mapsto (2 \pi)^{-n/2} \exp \left\{ -\frac{|x|^2}{2} \right\}
\]
(where $|x|$ denotes the standard Euclidean norm of $x\in \R^n$) and, for $ p \geq 1$, by $\mathbb{L}^p(\gamma_n)$ the set of measurable functions $f \colon \mathbb{R}^n \to \mathbb{R}$ such that $|f|^p$ is integrable with respect to $\gamma_n$. Then, given $g \in \mathbb{L}^1(\gamma_n)$, the Ornstein-Ulhenbeck semi-group is defined as
\begin{equation} \label{eq:ou}
P_tg(x) := \int g\left(e^{-t}x + \sqrt{1-e^{-2t}}y\right)\,d\gamma_n(y) \qquad \qquad x \in \mathbb{R}^n , \; t \geq 0.
\end{equation}

It is well known that the family $(P_t)_{t \geq 0}$ enjoys the so-called hypercontractivity property \cite{Nel66,Nel73,Gro75} which asserts that, for any $p>1$, any $t>0$ and $q \leq 1 + (p-1)e^{2t}$, $P_t g$ is more regular than $g$ in the sense that,
if $g \in \mathbb{L}^p(\gamma_n)$ then $P_tg \in \mathbb{L}^q(\gamma_n)$ and moreover
\[
\| P_t g\|_q \leq \|g\|_p. 
\]
However this property is empty when one only assumes that $g \in \mathbb{L}^1(\gamma_n)$. A natural question is therefore to ask if the semi-group has anyway some regularization effect also in this case. Given $g : \mathbb{R}^n \to \mathbb{R}$ non-negative  with $\int g \,d\gamma_n =1$, by Markov's inequality and the fact that $\int P_s g\,d\gamma_n=1$ we have
\[
\gamma_n(\{P_sg \geq t \}) \leq \frac{1}{t} \qquad \qquad \forall t>0 . 
\]
The continuous version of Talagrand's conjecture (adapted from \cite[Problems 1 and 2]{Tal89:1}) states that as soon as $s>0$,
\[
\lim_{t \to \infty} \sup_{g \geq 0, \int g\,d\gamma_n=1}t \gamma_n(\{P_s g \geq t \}) = 0.
\] 
The most recent paper dealing with this conjecture is due to Lehec \cite{Leh16} who proved that, for any $s >0$ there exists a constant $\alpha_s \in (0, \infty)$ (depending only on $s$ and not on the dimension $n$) such that for any non-negative function $g : \mathbb{R}^n \to \mathbb{R}^+$ with $\int g\, d\gamma_n =1$,
\begin{equation} \label{Leh16}
\gamma_n(\{P_s g \geq t \}) \leq \frac{\alpha_s}{t \sqrt{\log t}} \qquad \qquad \forall t>1 
\end{equation}
and this bound is optimal in the sense that the factor $\sqrt{\log t}$ cannot be improved. In the first paper dealing with this question \cite{BBBOW13}, Ball, Barthe, Bednorz, Oleszkiewicz and Wolff already obtained a similar bound but with a constant $\alpha_s$ depending heavily on the dimension $n$ plus some extra $\log \log t$ factor in the numerator. Later Eldan and Lee \cite{EL14:2} proved that the above bound holds with a constant $\alpha_s$ independent on $n$ but again with the extra $\log \log t$ factor in the numerator. Finally the conjecture was fully proved by Lehec removing the $\log \log t$ factor \cite{Leh16} and giving an explicit bound on $\alpha_s$, namely that $\alpha_s := \alpha \max(1,\frac{1}{2s})$ for some numerical constant $\alpha$.

\medskip

In both Eldan-Lee and Lehec's papers, the two key ingredients are the following: 
\begin{itemize}
\item[(1)] for any $s>0$, the Ornstein-Uhlenbeck semi-group satisfies, for all non-negative function $g \in \mathbb{L}^1(\gamma_n)$,
\[
\mathrm{Hess}\,(\log P_s g) \geq -\frac{1}{2s} \mathrm{Id},
\]
where $\mathrm{Hess}$ denotes the Hessian matrix and $\mathrm{Id}$ the identity matrix of $\mathbb{R}^n$.
This is a somehow standard property easy to prove thanks to the kernel representation \eqref{eq:ou};
\item[(2)] for any positive function $g$ with $\mathrm{Hess}\,(\log g) \geq -\beta \mathrm{Id}$, for some $\beta>0$, and $\int g \,d\gamma_n =1$, it holds
\[
\gamma_n(\{g \geq t \}) \leq \frac{C_\beta}{t \sqrt{\log t}} \qquad \qquad \forall t>1,
\]
with $C_\beta = \alpha \max(1, \beta)$.
\end{itemize}

\medskip

It will be more convenient to deal with $g=e^f$ in the sequel so we move to this setting now. The last inequality can be reformulated as follows: 
for any $f : \mathbb{R}^n \to \mathbb{R}$ with $\int e^f\, d\gamma_n =1$ and $\mathrm{Hess}\,(f) \geq -\beta \mathrm{Id}$, it holds
\begin{equation} \label{eq:start}
\gamma_n \left(\left\{ f \geq  t \right\} \right) \leq C_\beta \frac{e^{-t}}{\sqrt{t}} \qquad \qquad \forall t >0 .
\end{equation}

\medskip

We now describe the two main contributions of this note
(which
were independently obtained by Ramon van Handel). 
First, as a warm up, we give in Section~\ref{sec:tal-1} a short proof of \eqref{eq:start} in dimension 1. The main argument of this 
proof is that due to the semi-convexity of $f$, the condition $(2\pi)^{-1/2}\int e^{f-\frac{1}{2}|x|^2}\,d\gamma =1$ 
implies a \emph{pointwise} comparison between $f$ and the function $|x|^2/2$, which then can be turned into a tail comparison. 

\medskip

Then, in dimension $n$, we give in Section~\ref{sec:logcvx} a sharp version of the upper bound \eqref{eq:start} for convex functions.
Our main result states:

\begin{thm}\label{main-result}
Suppose that $f: \R^n \to \R$ is a convex function such that $\int e^f\,d\gamma_n=1$, then
\begin{equation}\label{eq:sharp}
\gamma_n (f \geq t) \leq \overline{\Phi}(\sqrt{2t}),\qquad \forall t\geq0,
\end{equation}
where $\overline{\Phi}(t) = \frac{1}{\sqrt{2\pi}} \int_t^{+\infty} e^{-u^2/2}\,du$, $t\in \R.$
\end{thm}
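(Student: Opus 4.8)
The plan is to exploit convexity to reduce the $n$-dimensional statement to a one-dimensional comparison, in the spirit of the warm-up described for \eqref{eq:start}. First I would use the fact that a convex function $f$ lies above each of its tangent hyperplanes: for any $x_0\in\R^n$ with $f$ differentiable at $x_0$, $f(x)\geq f(x_0)+\langle\nabla f(x_0),x-x_0\rangle$ for all $x$. Feeding the affine lower bound $\ell(x)=f(x_0)+\langle\nabla f(x_0),x-x_0\rangle$ into the constraint $\int e^f\,d\gamma_n=1$ gives $1\geq\int e^{\ell}\,d\gamma_n=\exp\{f(x_0)-\langle\nabla f(x_0),x_0\rangle+\tfrac12|\nabla f(x_0)|^2\}$, using the Laplace transform of the Gaussian. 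This is a pointwise inequality relating the value, the gradient, and the position; rewriting it as $f(x_0)\leq\langle\nabla f(x_0),x_0\rangle-\tfrac12|\nabla f(x_0)|^2\leq\tfrac12|x_0|^2$ (by Cauchy--Schwarz and maximizing the quadratic in $|\nabla f(x_0)|$) already yields the \emph{pointwise} bound $f\leq\tfrac12|\cdot|^2$, hence the crude tail bound $\gamma_n(f\geq t)\leq\gamma_n(|x|^2\geq 2t)$. But this is not yet \eqref{eq:sharp}, since $\gamma_n(|x|^2\geq 2t)$ is much larger than $\overline\Phi(\sqrt{2t})=\gamma_1(x\geq\sqrt{2t})$ in high dimension.

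To get the \emph{sharp} bound I would instead argue that the tail of $f$ under $\gamma_n$ is controlled by a one-dimensional tail. The key observation is that the inequality $\int e^{\ell}\,d\gamma_n\leq 1$ holding for the tangent plane at \emph{every} point forces a strong rigidity: the set $\{f\geq t\}$ is convex (a superlevel set of a convex function), so $\gamma_n(f\geq t)\leq\gamma_n(H)$ for a suitable halfspace $H$ supporting $\{f\geq t\}$, and $\gamma_n(H)=\overline\Phi(d)$ where $d$ is the (signed) Gaussian distance from the origin to $H$. So I need to show that if $\{f\geq t\}$ is nonempty and convex with $\int e^f\,d\gamma_n=1$, then the closest halfspace containing it lies at distance at least $\sqrt{2t}$ from the origin; equivalently, for any unit vector $u$ and any supporting halfspace $H_u=\{x:\langle x,u\rangle\geq a\}\supseteq\{f\geq t\}$, one has $a\geq\sqrt{2t}$. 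The cleanest route is probably a one-dimensional reduction: project the Gaussian onto the direction $u$ realizing the minimal halfspace, replace $f$ by the largest convex function $\tilde f$ on $\R$ depending only on $\langle x,u\rangle$ that still satisfies the constraint and whose $t$-superlevel set contains the projection of $\{f\geq t\}$, and then invoke the one-dimensional case, where the pointwise bound $\tilde f(s)\leq s^2/2$ combined with $\int e^{\tilde f}\,d\gamma_1=1$ can be pushed to the conclusion that $\{\tilde f\geq t\}\subseteq\{s\geq\sqrt{2t}\}$ (the extremal profile being $f(s)=sc-c^2/2$ for the linear functions that saturate).

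Concretely, the one-dimensional heart of the matter: if $f:\R\to\R$ is convex, $\int e^f\,d\gamma_1=1$, and $f(s_0)\geq t$ for some $s_0>0$, I want $s_0\geq\sqrt{2t}$. From the tangent-line bound at $s_0$, $f(s_0)\leq s_0 c-c^2/2$ where $c=f'(s_0)\geq0$ (the derivative must be nonnegative at a point where $f$ exceeds its mean on the relevant side; one handles signs by symmetry), and $s_0c-c^2/2\leq s_0^2/2$, but to get the \emph{sharp} constant rather than just $f\leq s^2/2$ one observes that convexity plus the normalization actually forces $c\geq s_0-\sqrt{s_0^2-2t}$ or similar, i.e. one integrates the constraint more carefully against the tangent line at $s_0$ and at a second well-chosen point. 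I would then lift back: $\gamma_n(f\geq t)\leq\gamma_n(\langle x,u\rangle\geq s_0)=\overline\Phi(s_0)\leq\overline\Phi(\sqrt{2t})$ since $\overline\Phi$ is decreasing.

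The main obstacle, I expect, is the reduction-to-one-dimension step: showing that the worst case for a convex $f$ on $\R^n$ under the Gaussian constraint is attained by a function of a single linear coordinate, so that no dimensional loss occurs. The pointwise bound $f\leq|\cdot|^2/2$ is immediate and dimension-free, but converting it into the halfspace tail $\overline\Phi(\sqrt{2t})$ rather than the ball tail $\gamma_n(|x|^2\geq 2t)$ requires genuinely using that the constraint $\int e^{\ell}\,d\gamma_n\leq1$ holds for \emph{every} tangent affine function simultaneously, which is a much stronger condition than the single integral constraint — I would look to extract from this family of inequalities (perhaps via an extremal/variational argument, replacing $f$ by $\log$ of a log-linear or "one-dimensional" competitor with the same superlevel set) the conclusion that the supporting halfspace of $\{f\ge t\}$ cannot come closer to the origin than $\sqrt{2t}$.
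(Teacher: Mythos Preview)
Your approach has a genuine gap at its central step. You write that ``the set $\{f\geq t\}$ is convex (a superlevel set of a convex function)'', but this is false: it is the \emph{sublevel} sets $\{f\leq t\}$ of a convex function that are convex. The superlevel set $\{f\geq t\}$ is the complement of an open convex set and is typically not convex. For instance, take $f(x)=c|x|^2+\frac{n}{2}\log(1-2c)$ for any $c\in(0,\tfrac12)$, which is convex and satisfies $\int e^f\,d\gamma_n=1$; here $\{f\geq t\}$ is the exterior of a centred ball for every $t\geq0$, and no proper halfspace contains it. Your strategy of bounding $\gamma_n(f\geq t)$ by $\gamma_n(H)$ for a halfspace $H\supseteq\{f\geq t\}$ therefore cannot get started, and the ``direction $u$ realizing the minimal enclosing halfspace'' on which you want to project simply does not exist.

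Even in dimension one the strategy falls short of the sharp constant: the pointwise bound $f\leq |\cdot|^2/2$ gives only $\{f\geq t\}\subset\{|s|\geq\sqrt{2t}\}$, hence $\gamma_1(f\geq t)\leq 2\,\overline\Phi(\sqrt{2t})$, and the same radial example shows the factor $2$ cannot be removed by any pointwise-inclusion argument. The paper's proof circumvents all of this by working instead with the convex sublevel sets $A_s=\{f\leq s\}$ and invoking Ehrhard's inequality~\eqref{eq:Ehr83}, which yields that $\varphi(s)=\Phi^{-1}(\gamma_n(A_s))$ is \emph{concave} on $\R$. This concavity is the correct dimension-free substitute for the halfspace comparison you were seeking: combined with the identity $1=\int_{-\infty}^\infty e^u\,\overline\Phi(\varphi(u))\,du$ (a rewriting of $\int e^f\,d\gamma_n=1$), a short Fenchel--Legendre argument forces $\varphi(u)\geq\sqrt{2u}$ for all $u\geq0$, which is exactly~\eqref{eq:sharp}.
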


Let us make a few comments on this result. First, using the following classical bound (which is asymptotically optimal)
\begin{equation}\label{eq:boundPhibar}
\overline{\Phi}(s) = \frac{1}{\sqrt{2\pi}}\int_s^\infty e^{-x^2/2}\, dx 
\leq  
\frac{1}{\sqrt{2\pi}}\int_s^\infty \frac{x}{s} e^{-x^2/2}\,dx
= 
 \frac{e^{-s^2/2}}{\sqrt{2\pi}s}, \qquad \forall s>0,
\end{equation}
one immediately recovers \eqref{eq:start} with the constant $C_0' = 1/(2\sqrt{\pi}).$ Furthermore, the bound \eqref{eq:sharp} is sharp. 
Indeed, for a given value of $t\geq 0$, Inequality \eqref{eq:sharp} becomes an equality for the function 
\[
f_t(x) = \sqrt{2t}x_1 -t,\qquad x = (x_1,\ldots,x_n) \in \R^n.
\]
Finally, since the Ornstein-Uhlenbeck semigroup preserves log-convexity (this follows from the fact that any positive combination of 
log-convex functions remains log-convex, see e.g \cite{MOA11:book} p. 649), Theorem \ref{main-result} immediately implies the following corollary.

\begin{cor}\label{cor:lc-reg}
Let $g$ be a log-convex function such that $\int g\,d\gamma_n=1$, then for any $s\geq 0$,
\[
\gamma_n(P_sg \geq t) \leq \overline{\Phi}(\sqrt{2\log(t)}),\qquad \forall t\geq1.
\]
\end{cor}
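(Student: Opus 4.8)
\textbf{Proof strategy for Corollary \ref{cor:lc-reg}.}

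The plan is to reduce the statement about $P_sg$ directly to Theorem \ref{main-result}, using only two elementary facts: that $P_s$ preserves log-convexity and that $P_s$ preserves the normalization $\int (\cdot)\,d\gamma_n = 1$. Concretely, fix $s \geq 0$ and let $g$ be log-convex with $\int g\,d\gamma_n = 1$. First I would observe that $h := P_s g$ is again log-convex. Indeed, from the kernel representation \eqref{eq:ou}, $P_sg(x)$ is an average (hence a positive combination, in the integral sense) of the functions $x \mapsto g(e^{-s}x + \sqrt{1-e^{-2s}}\,y)$; each of these is log-convex because $g$ is log-convex and precomposition with the affine map $x \mapsto e^{-s}x + \sqrt{1-e^{-2s}}\,y$ preserves log-convexity, and a positive combination (integral) of log-convex functions is log-convex by the cited fact (\cite{MOA11:book}, p.~649). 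Second, by Fubini's theorem and the fact that $\gamma_n$ is the invariant measure of the Ornstein--Uhlenbeck semigroup, $\int P_sg\,d\gamma_n = \int g\,d\gamma_n = 1$. Hence $h$ is a log-convex function with $\int h\,d\gamma_n = 1$.

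Next I would write $h = e^f$ with $f := \log P_sg$, which is well-defined and finite wherever $P_sg > 0$ (and $P_sg > 0$ everywhere on $\R^n$ as soon as $g$ is not identically $0$, which it cannot be since $\int g\,d\gamma_n = 1$ and $g \geq 0$ by log-convexity). By the previous paragraph, $f$ is convex and satisfies $\int e^f\,d\gamma_n = 1$, so Theorem \ref{main-result} applies and gives
\begin{equation}\label{eq:cor-apply}
\gamma_n\left(\log P_sg \geq u\right) \leq \overline{\Phi}(\sqrt{2u}), \qquad \forall u \geq 0.
\end{equation}
Finally, for $t \geq 1$ the event $\{P_sg \geq t\}$ coincides with $\{\log P_sg \geq \log t\}$, and since $t \geq 1$ we have $u := \log t \geq 0$, so \eqref{eq:cor-apply} yields $\gamma_n(P_sg \geq t) \leq \overline{\Phi}(\sqrt{2\log t})$ for all $t \geq 1$, which is the claim.

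I do not anticipate any serious obstacle here: the corollary is a genuine corollary, and the only points requiring a line of justification are (i) the stability of log-convexity under $P_s$ — which is exactly the remark invoked just before the statement, combined with the trivial stability of log-convexity under affine precomposition — and (ii) the mass-preservation $\int P_sg\,d\gamma_n = \int g\,d\gamma_n$, which follows from Fubini and the invariance of $\gamma_n$ under $P_s$ (equivalently, from $\int P_sg\,d\gamma_n = \int g\,d\gamma_n$ being the $t$-independent integral of $P_sg$, already noted in the Introduction). One should also record the harmless edge case $s = 0$, where $P_0g = g$ and the statement is literally Theorem \ref{main-result} written multiplicatively. No quantitative estimate beyond Theorem \ref{main-result} is needed.
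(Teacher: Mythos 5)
Your proposal is correct and follows exactly the paper's argument: the corollary is deduced by noting that $P_s$ preserves log-convexity (via the kernel representation \eqref{eq:ou} and the fact that positive combinations of log-convex functions are log-convex) and preserves the normalization $\int (\cdot)\,d\gamma_n=1$, and then applying Theorem \ref{main-result} to $f=\log P_sg$ with $u=\log t$. The paper treats this as immediate, and your write-up simply spells out the same steps.
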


In the special case when $g$ is log-convex, Corollary~\ref{cor:lc-reg} is a sharp improvement of Lehec's result \eqref{Leh16}. 
Note that for log-convex $g$, the constant $\alpha_s$ can be taken independent of $s$ unlike in \eqref{Leh16}, but this already
followed from Lehec's inequality \eqref{eq:start} combined with the preservation of log-convexity by the Ornstein-Uhlenbeck semigroup.

Another consequence of Theorem~\ref{main-result} is that a deviation inequality for structured functions also follows
for other measures that can be obtained by ``nice'' pushforwards of Gaussian measure. 
Indeed, observe that for any coordinate-wise non-decreasing, convex function $f$ on $\RL^n$,
and any convex functions $g_1,\ldots, g_n:\RL^N\ra\RL$,
the composition $f(g_1(x), \ldots, g_n(x))$
is convex on $\RL^{N}$. Hence we immediately have the following corollary.

\begin{cor}\label{cor:other-meas}
For a standard Gaussian random vector $Z$ in $\RL^N$, let the probability measure $\mu$ on $\RL^n$
be the joint distribution of $(g_1(Z), \ldots, g_n(Z))$, where $g_1,\ldots,g_n : \RL^N\ra\RL$ are convex functions.
Suppose that $f: \R^n \to \R$ is a coordinate-wise non-decreasing, convex function such that $\int_{\RL^n} e^f\,d\mu=1$. Then
\begin{equation}\label{eq:sharp}
\mu (f \geq t) \leq \overline{\Phi}(\sqrt{2t}),\qquad \forall t\geq0,
\end{equation}
\end{cor}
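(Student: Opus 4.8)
The plan is to deduce Corollary~\ref{cor:other-meas} from Theorem~\ref{main-result} by transporting the problem back to Gaussian space. Introduce the map $G\colon \RL^N\to\RL^n$, $G(x)=(g_1(x),\dots,g_n(x))$, so that by construction $\mu$ is the image (pushforward) of $\gamma_N$ under $G$. The natural candidate to feed into Theorem~\ref{main-result} is the composition $h:=f\circ G\colon\RL^N\to\RL$.

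First I would check that $h$ is convex on $\RL^N$; this is precisely the elementary composition fact recalled just before the statement. For $x,y\in\RL^N$ and $\lambda\in[0,1]$, convexity of each $g_i$ gives $g_i(\lambda x+(1-\lambda)y)\le\lambda g_i(x)+(1-\lambda)g_i(y)$ coordinate by coordinate; the coordinate-wise monotonicity of $f$ then lets one replace the argument by this larger vector, and convexity of $f$ finishes the bound $h(\lambda x+(1-\lambda)y)\le\lambda h(x)+(1-\lambda)h(y)$. Since $f$ and the $g_i$ are finite convex functions on all of $\RL^n$ resp.\ $\RL^N$ (hence continuous and Borel measurable), $h$ is a genuine real-valued convex function on $\RL^N$, with no domain subtleties, and $G$ is measurable so that $\mu$ is well defined.

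Next comes the change of variables. Because $\mu$ is the pushforward of $\gamma_N$ by $G$, we have $\int_{\RL^N}e^{h}\,d\gamma_N=\int_{\RL^n}e^{f}\,d\mu=1$, so $h$ satisfies the normalization hypothesis of Theorem~\ref{main-result}; likewise $\gamma_N(h\ge t)=\gamma_N\bigl(G^{-1}(\{f\ge t\})\bigr)=\mu(f\ge t)$ for every $t\ge0$. Applying Theorem~\ref{main-result} to $h$ on $\RL^N$ (the ambient dimension in that theorem being an arbitrary integer) therefore yields $\mu(f\ge t)=\gamma_N(h\ge t)\le\overline{\Phi}(\sqrt{2t})$ for all $t\ge0$, which is the claimed bound.

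There is no real obstacle here beyond this bookkeeping: the whole argument rests on the composition lemma of convex analysis together with a measure-theoretic change of variables. The only points worth stating explicitly are that the composition $h$ is honestly convex and finite on all of $\RL^N$, and that the normalization $\int e^f\,d\mu=1$ transfers to $\int e^h\,d\gamma_N=1$, so that Theorem~\ref{main-result} applies verbatim with $n$ replaced by $N$.
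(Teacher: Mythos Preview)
Your proof is correct and follows exactly the same approach as the paper: the composition $h=f\circ G$ is convex on $\RL^N$ by the observation stated just before the corollary, the pushforward relation transfers both the normalization and the tail probability, and Theorem~\ref{main-result} applied to $h$ gives the result.
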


For example, consider the exponential distribution, whose density is $e^{-x}$ on $\RL_+=(0,\infty)$ and which can be realized
as $\frac{Z_1^2+Z_2^2}{2}$ with $Z_1, Z_2$ i.i.d. standard Gaussian. Clearly a product of exponential distributions on the line
is an instance covered by Corollary~\ref{cor:other-meas}, since we can take $N=2n$
and $g_i(x)=\frac{x_i^2+x_{i+1}^2}{2}$. More generally, Corollary~\ref{cor:other-meas} applies to 
a product of $\chi^2$ distributions with arbitrary degrees of freedom, and also to some cases
with correlation (consider for example $N=3, g_1(x)=\frac{x_1^2+x_{2}^2}{2}$ and $g_2(x)=\frac{x_2^2+x_{3}^2}{2}$).


\medskip

The proof of Theorem \ref{main-result} is given in Section~\ref{sec:logcvx}. It relies on the Ehrhard inequality, which we recall now: according to \cite[Theorem 3.2]{Ehr83}, if $A,B \subset \R^n$ are two convex sets, then 
\begin{equation}\label{eq:Ehr83}
\Phi^{-1} (\gamma_n(\lambda A + (1-\lambda) B)) \geq \lambda \Phi^{-1} (\gamma_n(A)) + (1-\lambda) \Phi^{-1} (\gamma_n(B)),\qquad \forall \lambda \in [0,1],
\end{equation}
where $\lambda A + (1-\lambda) B := \{\lambda a + (1-\lambda)b : a \in A, b \in B\}$ denotes the usual Minkowski sum and $\Phi^{-1}$ is the inverse of the cumulative distribution function $\Phi$ of $\gamma_1$:
\begin{equation}\label{eq:Phi}
\Phi(t) = \frac{1}{\sqrt{2\pi}} \int_{-\infty}^t e^{-u^2/2}\,du,\qquad t \in \R.
\end{equation}
After Ehrhard's pioneer work, Inequality \eqref{eq:Ehr83} was shown to be true if only one set is assumed to be convex by Lata{\l }a \cite{Lat96} and finally to arbitrary measurable sets by Borell \cite{Bor03}. See also \cite{BH09,Han16} and the references therein for recent developments on this inequality. Inequality \eqref{eq:Ehr83} (for arbitrary sets $A,B$) is a very strong statement in the hierarchy of Gaussian geometric and functional inequalities. For instance, it gives back the celebrated Gaussian isoperimetric result of Sudakov-Tsirelson \cite{ST74} and Borell \cite{Bor75b}. Another elegant consequence of \eqref{eq:Ehr83} due to Kwapie{\'n} is that if $f$ is a convex function on $\R^n$ which is integrable with respect to $\gamma_n$, then the median of $f$ is always less than or equal to the mean of $f$ under $\gamma_n$.
The key ingredient in Kwapie{\'n}'s proof is the observation that the function 
\[
\alpha(t) = \Phi^{-1}(\gamma_n(f \leq t)),\qquad t \in \R
\]
is \emph{concave} over $\R$; this observation (already made in Ehrhard's original paper) also plays a key role in our proof of Theorem \ref{main-result}.

After the completion of this work, we learned that Paouris and Valettas \cite{PV16} developed in a recent paper similar ideas to derive from \eqref{eq:Ehr83} deviation inequalities for convex functions \emph{under} their mean.

In Section~\ref{sec:logsemicvx}, we give a second proof of Theorem \ref{main-result}, and also discuss
(following an observation of R. van Handel) the difficulty of its extension to the log-semiconvex case.



\medskip
\noindent{\bf Acknowledgement.} The results of this note 
were independently obtained by Ramon van Handel 
a few months before us, as we learnt after a version of this note was circulated.
Although he chose not to publish them, these observations should be considered as due to him.
We are also grateful to him for numerous comments on earlier drafts of this note.

\section{The Continuous Talagrand Conjecture in dimension 1}
\label{sec:tal-1}

In the next lemma we take advantage of the semi-convexity property $\mathrm{Hess}\,(f) \geq -\beta \mathrm{Id}$ to derive information on $f$. 
More precisely we may compare $f$ to $x \mapsto |x|^2/2$. The result holds in any dimension, and we give two proofs for completeness.

\begin{lem} \label{foot}
Let $f \colon \mathbb{R}^n \to \mathbb{R}$ and $\beta \geq 0$ be such that $\int e^f\,d\gamma_n =1$, $f$ is smooth and $\mathrm{Hess}\,(f) \geq -\beta \mathrm{Id}$.
Then,
\[
f(x) \leq \frac{n}{2}  \ln(1+\beta) +  \frac{1}{2}|x|^2 , \qquad \forall x \in \mathbb{R}^n .
\] 
\end{lem}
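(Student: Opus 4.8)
The plan is to reformulate the statement in terms of the function $u(x) := \tfrac12|x|^2 - f(x)$. With this substitution the hypothesis $\mathrm{Hess}(f)\geq -\beta\,\mathrm{Id}$ becomes the Hessian upper bound $\mathrm{Hess}(u) \leq (1+\beta)\,\mathrm{Id}$, the normalization $\int e^f\,d\gamma_n = 1$ reads $\int_{\R^n} e^{-u(x)}\,dx = (2\pi)^{n/2}$, and the conclusion to be proved is exactly the pointwise lower bound $u(x) \geq -\tfrac n2\ln(1+\beta)$ for all $x\in\R^n$.

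So I would fix an arbitrary point $x_0 \in \R^n$ and exploit the upper bound on the Hessian: since $f$ (hence $u$) is smooth with $\mathrm{Hess}(u)\leq(1+\beta)\,\mathrm{Id}$, Taylor's formula with integral remainder gives
\[
u(x) \leq u(x_0) + \langle \nabla u(x_0), x - x_0\rangle + \tfrac{1+\beta}{2}|x-x_0|^2, \qquad \forall x \in \R^n.
\]
Exponentiating and integrating this inequality against Lebesgue measure — only monotonicity of the integral is used, since both sides are nonnegative and the right-hand side is integrable as $1+\beta>0$ — yields
\[
(2\pi)^{n/2} = \int_{\R^n} e^{-u(x)}\,dx \;\geq\; e^{-u(x_0)}\int_{\R^n} e^{-\langle\nabla u(x_0),\, x-x_0\rangle - \frac{1+\beta}{2}|x-x_0|^2}\,dx.
\]
The remaining integral is Gaussian: completing the square in $x$ shows it equals $\bigl(\tfrac{2\pi}{1+\beta}\bigr)^{n/2} \exp\!\bigl(\tfrac{|\nabla u(x_0)|^2}{2(1+\beta)}\bigr) \geq \bigl(\tfrac{2\pi}{1+\beta}\bigr)^{n/2}$. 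Substituting and rearranging gives $(1+\beta)^{n/2} \geq e^{-u(x_0)}$, i.e. $u(x_0) \geq -\tfrac n2\ln(1+\beta)$, which unravels to the desired bound $f(x_0)\leq \tfrac n2\ln(1+\beta)+\tfrac12|x_0|^2$.

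There is essentially no serious obstacle here; the only points deserving a line of care are the convergence of the Gaussian integral (automatic since $1+\beta\geq1$) and the remark that the cross-term $|\nabla u(x_0)|^2$ produced by completing the square is nonnegative and can simply be discarded. For the "two proofs for completeness" promised in the statement, I would add a dual argument run directly on $f$: the function $f+\tfrac{\beta}{2}|\cdot|^2$ is convex, hence lies above its supporting hyperplane at $x_0$; feeding the resulting lower bound for $f$ into $\int e^{f-|x|^2/2}\,dx=(2\pi)^{n/2}$ and evaluating the same Gaussian integral leads, after bounding (equivalently, optimizing over) the linear term, to the identical conclusion. Both versions reduce to the same short computation once the correct Gaussian integral is set up.
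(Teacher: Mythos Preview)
Your argument is correct and is essentially the paper's first proof, reorganized: where the paper sets $h=f+\tfrac{\beta}{2}|\cdot|^2$, uses the Legendre inequality $h(x)\geq\langle x,t\rangle-h^*(t)$, integrates against $\gamma_n$, and then optimizes over $t$, you make the substitution $u=\tfrac12|\cdot|^2-f$ and use the quadratic Taylor upper bound on $u$ at $x_0$ directly, which leads to the very same Gaussian integral and lets you discard the nonnegative cross term $|\nabla u(x_0)|^2/(2(1+\beta))$ without an explicit optimization step. Your sketched ``dual argument'' (supporting hyperplane of $f+\tfrac{\beta}{2}|\cdot|^2$ at $x_0$, then integrate) is exactly the paper's first proof parametrized by base point rather than slope. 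The paper's second proof is genuinely different in flavor: it translates by $a$, rewrites the normalization as an integral against a scaled Gaussian, and applies Jensen's inequality to the convex function $y\mapsto\exp(\tilde h(y+a)-(1+\beta)\,y\cdot a)$; this avoids any Gaussian integral computation but is otherwise equivalent in strength.
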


\proof[First proof of Lemma \ref{foot}]
Let $h(x)=f(x)+ \frac{\beta}{2}|x|^2$. By assumption on $f$, the function $h$ is convex on $\R^n$ and hence 
\[
h(x)= \sup_{t \in \mathbb{R}^n} \left\{ \langle x,t\rangle - h^*(t) \right\},\qquad \forall x \in \R^n,
\] 
where 
\[
h^*(t):= \sup_{x \in \mathbb{R}^n} \left\{ \langle t,x\rangle - h(x) \right\},\qquad t \in \R^n
\]
is the Legendre transform of $h$. 
Now, we have for all $t \in \mathbb{R}^n$
\begin{align*}
1 
& = 
\int e^f\, d\gamma_n = \int \exp\left\{ h(x) - \frac{\beta}{2}|x|^2 \right\}\, d\gamma_n(x) \\ 
& \geq 
(2 \pi)^{-n/2} e^{-h^*(t)} \int \exp \left\{ \langle x,t\rangle - \frac{1+\beta}{2} |x|^2 \right\}\, dx \\
& =
(1+\beta)^{-n/2} \exp \left\{ -h^*(t) + \frac{1}{2(1+\beta)}|t|^2 \right\} .  
\end{align*}
Therefore, for all $ t \in \mathbb{R}^n$ it holds
\[
h^*(t) \geq - \frac{n}{2} \ln(1+\beta) + \frac{1}{2(1+\beta)}|t|^2 .
\]
In turn
\begin{align*}
h(x) 
& = 
\sup_t \left\{ \langle x,t\rangle - h^*(t) \right\} 
\leq 
\frac{n}{2} \ln(1+\beta) + \sup_t \left\{ \langle x,t\rangle - \frac{1}{2(1+\beta)}|t|^2 \right\} \\
& =
\frac{1}{2} \left(n \ln(1+\beta) + (1+\beta) |x|^2 \right) 
\end{align*}
which leads to the desired conclusion.
\endproof

\proof[Second proof of Lemma \ref{foot}]
Define $\tilde{h}(x) = h(x) + \frac{\beta}{2}|x|^2$, $x \in \R^n$ and let $\gamma_{n,\beta}$ be the gaussian measure $\mathcal{N}(0, \frac{1}{1+\beta}I)$, then it holds 
\[
1=\int e^{h(x)}\,d\gamma_n(x) = (1+\beta)^{-n/2}\int e^{\tilde{h}(x)}\,d\gamma_{n,\beta}(x)
\]
For all $a \in \R^n$, the change of variable formula then gives 
\[
1= (1+\beta)^{-n/2}e^{-\frac{(1+\beta)}{2}|a|^2}\int e^{\tilde{h}(y+a)-(1+\beta)y\cdot a}\,d\gamma_{n,\beta}(dy).
\]
The function $y\mapsto \tilde{h}(y+a)-(1+\beta)y\cdot a$ is convex and the function $x \mapsto e^x$ is convex and increasing so the function $y\mapsto \exp\left(\tilde{h}(y+a)-(1+\beta)y\cdot a\right)$ is also convex. So applying Jensen inequality yields to
\begin{align*}
1 &\geq (1+\beta)^{-n/2}e^{-\frac{(1+\beta)}{2}|a|^2} \exp \left(\tilde{h}\left(a+\int y\,d\gamma_{n,\beta}(y)\right) - (1+\beta)\int y\cdot a\,d\gamma_{n,\beta}(y)\right)\\
& = e^{-\frac{(1+\beta)}{2}|a|^2+ \tilde{h}(a)}
\end{align*}
and so $h(a) \leq |a|^2/2 + \frac{n}{2}\log(1+\beta).$
\endproof

\begin{rem}
The $\beta=0$ case of Lemma~\ref{foot} (i.e., for convex functions $f$, which is the essential case) is contained in Graczyk et al. \cite[Lemma 3.7]{GKLZ08}
(curiously it does not appear in the published version \cite{GKL10} of the paper), and in fact was proved in the more general setting
of subharmonic functions. The second proof given above is theirs and works for the more general setting. Also note that neither proof
requires smoothness of $f$, which however is sufficient for our purposes.
\end{rem}

In principle, one would hope to already get some deviation bound from the above lemma. More precisely, given $f$ as in Lemma \ref{foot}, we have
$$
\gamma_n \left( \left\{ f \geq t \right\}\right) 
\leq 
\gamma_n \left( \left\{ |x|^2 \geq 2t - n \ln(1+\beta) \right\}\right) ,
$$
thanks to Lemma \ref{foot},
and we are left with a tail estimate for a $\rchi^2$ distribution with $n$ degrees of freedom.
In dimension $n=1$, the tail of the $\rchi^2$ distribution behaves like $e^{-t}/\sqrt{t}$. Therefore, the above simple argument already gives back 
the estimate \eqref{eq:start} and thus provides a quick proof of the continuous Talagrand's conjecture for $n=1$, moreover with clean dependence
on $\beta$, as detailed below.

\begin{thm} \label{th:dim1}
If $f \colon \mathbb{R} \to \mathbb{R}$ is smooth and $\beta \geq 0$ are such that $\int e^f\, d\gamma_1 =1$ and $f'' \geq -\beta $ pointwise, then
\[
\gamma_1 \left(\left\{ f \geq  t \right\} \right) \leq \frac{1+\beta}{\sqrt{2}} \frac{e^{-t}}{\sqrt{t}} \qquad \qquad \forall t \geq1 .
\]
\end{thm}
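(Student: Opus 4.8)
The plan is to combine Lemma~\ref{foot} with a one–dimensional $\rchi^2$ tail estimate, exactly as indicated in the paragraph preceding the statement. Applying Lemma~\ref{foot} with $n=1$ (which is legitimate since $f$ is smooth and $f''\geq-\beta$) gives the pointwise comparison $f(x)\leq \tfrac12\ln(1+\beta)+\tfrac12 x^2$ for all $x\in\R$, hence
\[
\gamma_1\big(\{f\geq t\}\big)\ \leq\ \gamma_1\big(\{x^2\geq 2t-\ln(1+\beta)\}\big).
\]
If $\ln(1+\beta)\geq 2t$ the right–hand set is all of $\R$, so the left side is at most $1$; but then $1+\beta\geq e^{2t}$ forces $\tfrac{1+\beta}{\sqrt2}\tfrac{e^{-t}}{\sqrt t}\geq \tfrac{e^{t}}{\sqrt{2t}}\geq \tfrac{e}{\sqrt2}>1$ for $t\geq1$, so the inequality holds trivially there and we may assume $r:=\sqrt{2t-\ln(1+\beta)}\in(0,\sqrt{2t}\,]$.

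In that range $1+\beta=e^{2t-r^2}$, so $\tfrac{1+\beta}{\sqrt2}\tfrac{e^{-t}}{\sqrt t}=\tfrac{e^{t-r^2}}{\sqrt{2t}}$, and since $\gamma_1(\{x^2\geq r^2\})=2\overline{\Phi}(r)$ the whole claim reduces to the scalar inequality
\[
2\overline{\Phi}(r)\ \leq\ \frac{e^{t-r^2}}{\sqrt{2t}},\qquad 0\leq r\leq \sqrt{2t},\ \ t\geq1.
\]
I would establish this by studying $G(r):=\tfrac{e^{t-r^2}}{\sqrt{2t}}-2\overline{\Phi}(r)$ on $[0,\sqrt{2t}\,]$. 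At the endpoints, $G(0)=\tfrac{e^{t}}{\sqrt{2t}}-1>0$ for $t\geq1$, while using \eqref{eq:boundPhibar} in the form $2\overline{\Phi}(\sqrt{2t})\leq e^{-t}/\sqrt{\pi t}$ gives $G(\sqrt{2t})\geq \tfrac{e^{-t}}{\sqrt t}\big(\tfrac{1}{\sqrt2}-\tfrac{1}{\sqrt\pi}\big)>0$. For the interior, $G'(r)=\sqrt{2/\pi}\,e^{-r^2/2}-\tfrac{2r}{\sqrt{2t}}e^{t-r^2}$, so $G'(r)>0$ precisely when $r\,e^{t-r^2/2}<\sqrt{t/\pi}$; since $r\mapsto r\,e^{t-r^2/2}$ vanishes at $0$, increases on $[0,1]$ and decreases on $[1,\sqrt{2t}\,]$ with both $e^{t-1/2}$ and $\sqrt{2t}$ exceeding $\sqrt{t/\pi}$ when $t\geq1$, this map crosses the level $\sqrt{t/\pi}$ exactly once and stays above it thereafter. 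Hence $G$ is first increasing and then decreasing on $[0,\sqrt{2t}\,]$, so $G\geq\min\{G(0),G(\sqrt{2t})\}>0$, which is the desired bound.

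The only mildly technical part is this last step — the two endpoint estimates together with the unimodality bookkeeping for $G$ — and it is entirely routine; the real content of the theorem is Lemma~\ref{foot}. (Alternatively, one could dispense with $G$ and split according to whether $1+\beta$ is smaller or larger than $\sqrt{2t}\,e^{t}$, i.e.\ whether the target bound already exceeds $1$: in the large–$\beta$ case the trivial bound $\leq1$ suffices, and in the small–$\beta$ case one applies $2\overline{\Phi}(r)\leq\sqrt{2/\pi}\,e^{-r^2/2}/r$ and checks the elementary inequality $4t/\pi\leq(1+\beta)(2t-\ln(1+\beta))$, valid there because $u\mapsto e^{u}(2t-u)$ is increasing on the relevant range.)
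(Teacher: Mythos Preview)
Your argument is correct. The overall strategy---Lemma~\ref{foot} followed by a one-dimensional Gaussian tail estimate via \eqref{eq:boundPhibar}---is exactly the paper's, but the way you handle the ``bad'' regime is different. The paper splits not at $\ln(1+\beta)=2t$ but at the threshold $t=(1+\beta)\ln(1+\beta)/(2\beta)$: for $t$ above this value one plugs \eqref{eq:boundPhibar} directly and obtains the even sharper constant $(1+\beta)/\sqrt{\pi}$, while for $t$ below it Markov's inequality $\gamma_1(\{f\geq t\})\leq e^{-t}$ already suffices, since then $e^{-t}\leq\sqrt{(1+\beta)\ln(1+\beta)/(2\beta)}\,e^{-t}/\sqrt{t}\leq\frac{1+\beta}{\sqrt{2}}\,e^{-t}/\sqrt{t}$ using $\ln(1+\beta)\leq\beta$. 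This two-line case split replaces your unimodality analysis of $G$ entirely. Your alternative sketch at the end (split on whether the target bound exceeds $1$) is closer in spirit to this, though note that on the range $1+\beta\leq\sqrt{2t}\,e^{t}$ the map $u\mapsto e^{u}(2t-u)$ is not monotone for all $t\geq1$ (its maximum is at $u=2t-1$, which can fall inside the range when $t$ is close to $1$), so that version would still need a small endpoint check; the paper's Markov argument sidesteps this.
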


\begin{proof}
Assume first that $t \geq (1+\beta)\ln(1+\beta)/(2\beta)$. Using Inequality \eqref{eq:boundPhibar},
we get from Lemma \ref{foot}
\begin{align*}
\gamma_1 \left( \left\{ f \geq t \right\}\right) 
& \leq 
\gamma_1 \left( \left\{ |x| \geq \sqrt{2t - \ln(1+\beta)} \right\}\right)  \\
&\leq 
2(2 \pi)^{-1/2} \frac{\exp\left\{ -t + \frac{1}{2} \ln(1+\beta)\right\}}{\sqrt{2t-\ln(1+\beta)}} \\
&= 
\sqrt{\frac{1+\beta}{\pi}} \frac{e^{-t}}{\sqrt{t}} \frac{1}{\sqrt{1-(\ln(1+\beta)/(2t))}} \\
& \leq  
\sqrt{\frac{1+\beta}{\pi}} \frac{e^{-t}}{\sqrt{t}} \frac{1}{\sqrt{1-(\beta/(1+\beta))}}
= \frac{1+\beta}{\sqrt{\pi}} \frac{e^{-t}}{\sqrt{t}} .
\end{align*}
Now assume that $t \leq (1+\beta)\ln(1+\beta)/(2\beta)$. 
Thanks to Markov's inequality, we have
\begin{align*}
\gamma_1 \left(\left\{ f > t \right\} \right) 
 \leq 
e^{-t} 
\leq 
  \sqrt{(1+\beta)\ln(1+\beta) /(2\beta)} \frac{e^{-t}}{\sqrt{t}} 
 \leq
\frac{\sqrt{1+\beta}}{\sqrt{2}} \frac{e^{-t}}{\sqrt{t}}
\leq
\frac{1+\beta}{\sqrt 2} \frac{e^{-t}}{\sqrt{t}}
\end{align*}
where, in the third inequality, we used that $\ln(1+\beta) \leq \beta$.
The result follows.
\end{proof}

Unfortunately this naive approach of using the pointwise bound from Lemma \ref{foot} is specific to dimension 1, 
since in higher dimension the tail of the  $\rchi^2$ distribution does not have the correct behavior.
It should be noticed that Ball et al. \cite{BBBOW13} also have a quick direct proof of the Talagrand conjecture for $n=1$ 
that also uses a similar tail comparison with the $\rchi^2$ distribution, and also noticed that such a tail is not of the correct order for $n \geq 2$.

\section{The Deviation Inequality for Log-Convex Functions}
\label{sec:logcvx}

Throughout this section $f \colon \mathbb{R}^n \to \mathbb{R}$ is a \emph{convex} function satisfying $\int e^f \,d\gamma_n =1$ where $\gamma_n$ is the standard Gaussian measure on $\mathbb{R}^n$.
Given $s \in \mathbb{R}$, let 
\[
A_s:=\{f \leq s \}
\] 
and 
\[
\varphi(s):=\Phi^{-1}\left( \gamma_n(A_s) \right),
\]
where 
$\Phi^{-1}$ is the inverse of the Gaussian cumulative function $\Phi$ defined by \eqref{eq:Phi}.

The key ingredient in the proof of Theorem \ref{main-result} is the concavity of the function $\varphi$ that, as we shall see in the proof of the next lemma, is a direct consequence of Ehrhard's inequality \eqref{eq:Ehr83}.

\begin{lem} \label{lem:g}
Let $f$ and $\varphi$ be defined as above. Then $\varphi$ is concave, non-decreasing, $\lim_{s \to \infty} \varphi(s)=+ \infty$ and $\lim_{s \to -\infty} \varphi(s)=- \infty$.
\end{lem}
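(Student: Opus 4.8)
Here is how I would prove Lemma \ref{lem:g}.

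\medskip

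\textbf{Concavity.} The plan is to apply Ehrhard's inequality \eqref{eq:Ehr83} directly to the superlevel sets of the convex function $f$. Fix $s,t \in \R$ and $\lambda \in [0,1]$, and set $A_s = \{f \leq s\}$ and $A_t = \{f \leq t\}$. Since $f$ is convex, both sets are convex, so the Minkowski combination $\lambda A_s + (1-\lambda) A_t$ is a convex set, and by convexity of $f$ we have the inclusion $\lambda A_s + (1-\lambda) A_t \subseteq \{f \leq \lambda s + (1-\lambda) t\} = A_{\lambda s + (1-\lambda) t}$; indeed, if $f(x) \leq s$ and $f(y) \leq t$ then $f(\lambda x + (1-\lambda) y) \leq \lambda f(x) + (1-\lambda) f(y) \leq \lambda s + (1-\lambda) t$. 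By monotonicity of $\gamma_n$ and of $\Phi^{-1}$, this inclusion gives
\[
\varphi(\lambda s + (1-\lambda) t) = \Phi^{-1}\bigl(\gamma_n(A_{\lambda s + (1-\lambda) t})\bigr) \geq \Phi^{-1}\bigl(\gamma_n(\lambda A_s + (1-\lambda) A_t)\bigr).
\]
Applying \eqref{eq:Ehr83} to the right-hand side yields $\varphi(\lambda s + (1-\lambda)t) \geq \lambda \varphi(s) + (1-\lambda)\varphi(t)$, which is concavity. (Here one should note that $A_s$ could a priori be empty for $s$ very negative, in which case $\gamma_n(A_s)=0$ and $\Phi^{-1}(0) = -\infty$; the inequality still holds with the usual conventions, and in any case $A_s$ is nonempty for all $s$ large enough since $\int e^f\,d\gamma_n = 1$ forces $f$ to be finite somewhere, hence $f < \infty$ everywhere as $f$ is convex and real-valued.)

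\medskip

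\textbf{Monotonicity.} This is immediate: $s \mapsto A_s$ is non-decreasing for inclusion, so $s \mapsto \gamma_n(A_s)$ is non-decreasing, and since $\Phi^{-1}$ is increasing, $\varphi$ is non-decreasing.

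\medskip

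\textbf{The two limits.} For $\lim_{s\to+\infty}\varphi(s)$: since $f$ is real-valued on all of $\R^n$, the sets $A_s$ increase to $\R^n$ as $s \to \infty$, so $\gamma_n(A_s) \to 1$ by monotone convergence, and $\Phi^{-1}(\gamma_n(A_s)) \to \Phi^{-1}(1^-) = +\infty$. For $\lim_{s\to-\infty}\varphi(s)$: the sets $A_s$ decrease as $s \to -\infty$ to $\bigcap_s A_s = \{f = -\infty\} = \emptyset$ (again $f$ is real-valued), so $\gamma_n(A_s) \to 0$ and $\varphi(s) \to \Phi^{-1}(0^+) = -\infty$. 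I expect the only subtlety worth flagging is making sure that $f$ being convex with $\int e^f\,d\gamma_n = 1$ indeed forces $f$ to be everywhere finite (a real-valued convex function on $\R^n$ in the sense assumed throughout the section is automatically continuous and finite, so this is built into the hypothesis $f:\R^n\to\R$); given that, all three limiting/monotonicity claims are soft measure-theoretic facts and the only genuine content is the concavity step via Ehrhard.
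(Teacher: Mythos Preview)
Your proof is correct and follows essentially the same route as the paper: the concavity is obtained from the inclusion $\lambda A_{s_1}+(1-\lambda)A_{s_2}\subset A_{\lambda s_1+(1-\lambda)s_2}$ (a consequence of the convexity of $f$) combined with Ehrhard's inequality \eqref{eq:Ehr83}, while monotonicity and the two limits are dismissed in the paper as ``a direct and obvious consequence of the definition''. Your version simply spells out a few more details (the case of empty $A_s$, the monotone convergence for the limits) than the paper does.
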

The concavity of $\varphi$ was first observed by Ehrhard in \cite{Ehr83}. Below we recall the proof for the reader's convenience.

\begin{proof}
That $\varphi$ is non-decreasing and satisfies $\lim_{s \to \infty} \varphi(s)=+ \infty$ and
$\lim_{s \to -\infty} \varphi(s)=- \infty$ is a direct and obvious consequence of the definition.
Now we prove that $\varphi$ is concave, using Ehrhard's inequality. Given $\lambda \in [0,1]$ and $s_1, s_2 \in \mathbb{R}$, we have, by convexity of $f$,
\[
A_{\lambda s_1 + (1-\lambda)s_2} \supset \lambda A_{s_1} + (1-\lambda)A_{s_2}.
\]
Hence, by monotonicity of $\Phi^{-1}$, it holds
\[
\varphi(\lambda s_1 + (1-\lambda)s_2) \geq \Phi^{-1} \left( \gamma_n( \lambda A_{s_1} + (1-\lambda)A_{s_2}) \right) .
\]
Then, Ehrhard's inequality \eqref{eq:Ehr83} implies that
\begin{align*}
\Phi^{-1} \left( \gamma_n( \lambda A_{s_1} + (1-\lambda)A_{s_2}) \right) 
& \geq 
\lambda \Phi^{-1} \left( \gamma_n(  A_{s_1} ) \right) + (1-\lambda) \Phi^{-1} \left( \gamma_n( A_{s_2}) \right) \\
& = 
\lambda \varphi(s_1) + (1-\lambda)\varphi(s_2) 
\end{align*}
from which the concavity of $\varphi$ follows.
\end{proof}

\proof[Proof of Theorem \ref{main-result}]
Let $f$ and $\varphi$ be defined as above. Then, it is enough to show that  
\[
\varphi(u) \geq \sqrt{2u},\qquad \forall u \geq 0.
\]
Since $-\varphi : \R \to \R \cup\{+\infty\}$ is convex by Lemma \ref{lem:g} and lower-semicontinuous,
the Fenchel-Moreau Theorem applies and guarantees that
\[
-\varphi(u)= \sup_{t \in \mathbb{R}} \left\{ ut - \psi(t)\right\},\qquad \forall u \in \R,
\] 
where 
\[
\psi(t)=(-\varphi)^*(t):=\sup_{u \in \mathbb{R}} \left\{ ut + \varphi(u) \right\}
\]
is the Fenchel-Legendre transform of $-\varphi$. Also we observe that, since $\lim_{u \to \infty} \varphi(u)=+\infty$, necessarily $\psi(t)=+\infty$ for all $t >0$ so that 
\[
\varphi(u)= - \sup_{t \leq 0} \left\{ ut - \psi(t)\right\} = \inf_{t \leq 0} \left\{ -ut + \psi(t)\right\} .
\]
Now observe that
\[
1 = \int e^f\, d\gamma_n = \int_{-\infty}^\infty e^u \gamma_n(f \geq u) \,du = \int_{-\infty}^\infty e^u (1-\Phi(\varphi(u))\,du
= \int_{-\infty}^\infty e^u \overline{\Phi}(\varphi(u))\,du
\]
where we recall that $\overline{\Phi}=1-\Phi$.
Using integration by parts and the fact $\overline{\Phi}$ is decreasing, we have for all $t \leq 0$
\begin{align*}
1  = \int_{-\infty}^\infty e^u \overline{\Phi}(\varphi(u))\, du & \geq
\int_{-\infty}^\infty e^u \overline{\Phi}(-ut+\psi(t))\, du\\
& = (-t)e^{\frac{\psi(t)}{t}}\int_{-\infty}^{+\infty}e^{\frac{-v}{t}}\overline{\Phi}(v)\,dv\\
& =  e^{\frac{\psi(t)}{t}} \frac{1}{\sqrt{2\pi}}\int_{-\infty}^{+\infty} e^{\frac{-v}{t}} e^{-v^2/2}\,dv\\
& = \exp\left\{ \frac{\psi(t)}{t} + \frac{1}{2t^2} \right\}.
\end{align*}

Therefore, for all $t \leq 0$ it holds
\[
\psi(t) \geq -\frac{1}{2t} .
\]
In turn,
\[
\varphi(u)= \inf_{t \leq 0} \left\{ -ut + \psi(t)\right\} \geq \inf_{t \leq 0} \left\{ -ut - \frac{1}{2t} \right\} 
 = \sqrt{2u} 
\]
as expected.
\endproof

\section{Revisiting the deviation inequality, with a discussion of the semi-convex case}\label{sec:conj}
\label{sec:logsemicvx}

Suppose that $f : \R^n \to \R$ is a function such that $\int e^f\,d\gamma_n = 1.$
Define $\mu_f$ the distribution of $f$ under $\gamma_n$, that it to say
\[
\mu_f(A) := \gamma_n (\{x \in \R^n : f(x) \in A\}),\qquad \forall \text{ Borel } A \subset \R.
\]
Consider the monotone rearrangement transport map $T_f$ sending $\gamma_1$ onto $\mu_f$. It is defined by
\[
T_f(u) = F_f^{-1}\circ \Phi(u),\qquad \forall u \in \R,
\]
where $F_f(t) = \mu_f((-\infty,t])$, $t \in \R$, denotes the cumulative distribution function of $\mu_f$ and 
\[
F_f^{-1}(s) = \inf\{ t : F_f(t)\geq s\},\qquad s\in (0,1)
\]
its generalized inverse.

The following proposition will yield to a slightly different proof of Theorem \ref{main-result}.
\begin{prop}\label{prop-Tsemi}
With the notation above, if $T_f$ is $\kappa$-semiconvex, for some $\kappa\geq0$ \textit{i.e}
\[
T_f((1-t)x+ty) \leq (1-t)T_f(x) + t T_f(y) + \frac{\kappa}{2} t(1-t) |x-y|^2,\qquad \forall x,y \in \R,\quad \forall t \in [0,1],
\]
then 
\[
\gamma_n(\{f>u\}) \leq \overline{\Phi}\left(\sqrt{2u-\log(1+\kappa)}\right),\qquad \forall u\geq \frac{1}{2}\log(1+\kappa).
\]
\end{prop}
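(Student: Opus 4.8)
The plan is to push everything forward to the real line through $T_f$ and then rerun, in that one-dimensional picture, the argument of the first proof of Lemma~\ref{foot} followed by the tail comparison used in the proof of Theorem~\ref{th:dim1}; the point is that after transport we are automatically in dimension one, which is precisely where that tail comparison is sharp. Since $T_f$ pushes $\gamma_1$ forward onto $\mu_f$, the law of $f$ under $\gamma_n$ equals the law of $T_f$ under $\gamma_1$, so
\[
\int_{\R} e^{T_f}\,d\gamma_1 \;=\; \int_{\R^n} e^{f}\,d\gamma_n \;=\; 1
\qquad\text{and}\qquad
\gamma_n(\{f>u\}) \;=\; \gamma_1(\{T_f>u\}),\quad u\in\R .
\]
I will use only two further facts about $T_f$: it is non-decreasing (being a monotone rearrangement), and it is finite-valued (being the quantile function of a genuine probability measure on $\R$).

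The hypothesis that $T_f$ is $\kappa$-semiconvex is, after expanding squares, precisely the statement that $g(v):=T_f(v)+\tfrac{\kappa}{2}v^2$ is convex on $\R$; being finite it is continuous, hence lower semicontinuous, so Fenchel--Moreau gives $g(v)=\sup_{t\in\R}\{vt-g^*(t)\}$ for all $v$, with $g^*$ the Legendre transform. Inserting $g(v)\geq vt-g^*(t)$ into the identity $1=\tfrac{1}{\sqrt{2\pi}}\int_{\R}e^{g(v)-\frac{1+\kappa}{2}v^2}\,dv$ and computing the Gaussian integral gives, for every $t\in\R$,
\[
1 \;\geq\; \frac{e^{-g^*(t)}}{\sqrt{2\pi}}\int_{\R} e^{vt-\frac{1+\kappa}{2}v^2}\,dv \;=\; \frac{1}{\sqrt{1+\kappa}}\exp\!\left\{\frac{t^2}{2(1+\kappa)}-g^*(t)\right\},
\]
hence $g^*(t)\geq \frac{t^2}{2(1+\kappa)}-\tfrac12\log(1+\kappa)$. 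Taking the Legendre transform back and subtracting $\tfrac{\kappa}{2}v^2$ yields the \emph{pointwise} comparison
\[
T_f(v)\;\leq\; \frac{v^2}{2}+\frac12\log(1+\kappa),\qquad v\in\R,
\]
which is exactly the one-dimensional instance of Lemma~\ref{foot} for the map $T_f$ with $\beta=\kappa$.

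Finally I turn this pointwise bound into the tail estimate. Fix $u\geq \tfrac12\log(1+\kappa)$ and set $r:=\sqrt{2u-\log(1+\kappa)}\geq 0$. For $0\leq v<r$ the pointwise bound gives $T_f(v)<\tfrac{r^2}{2}+\tfrac12\log(1+\kappa)=u$; for $v<0$, monotonicity of $T_f$ gives $T_f(v)\leq T_f(0)\leq \tfrac12\log(1+\kappa)\leq u$. Hence $\{T_f>u\}\subseteq[r,+\infty)$, and, $\gamma_1$ being non-atomic,
\[
\gamma_n(\{f>u\}) \;=\; \gamma_1(\{T_f>u\}) \;\leq\; \overline{\Phi}(r) \;=\; \overline{\Phi}\big(\sqrt{2u-\log(1+\kappa)}\big).
\]

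There is no serious obstacle here: the heart of the argument is the short chain Fenchel duality $\to$ Gaussian integral $\to$ pointwise bound, which merely replays the first proof of Lemma~\ref{foot} in dimension one. The only places asking for a little care are routine: checking that $T_f$ is finite-valued and that $g$ is proper and lower semicontinuous so that Fenchel--Moreau applies, and exploiting the monotonicity of $T_f$ to discard the negative half-line in the last display, since the pointwise bound on its own controls only $|v|$ and not $v$.
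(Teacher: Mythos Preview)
Your proof is correct and follows essentially the same route as the paper: push forward to dimension one via $T_f$, apply the one-dimensional case of Lemma~\ref{foot} (which you reprove inline via Fenchel duality, whereas the paper simply cites it), and convert the resulting pointwise bound $T_f(v)\leq \tfrac12 v^2+\tfrac12\log(1+\kappa)$ into the claimed tail estimate. The only cosmetic difference is in this last conversion: the paper reads the inequality through the identity $T_f=F_f^{-1}\circ\Phi$ to get $F_f(u)\geq\Phi(\sqrt{2u-\log(1+\kappa)})$ directly, while you argue via the inclusion $\{T_f>u\}\subseteq[r,\infty)$ using monotonicity of $T_f$; these are the same step phrased two ways.
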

\proof
The $\kappa$-semiconvexity condition is equivalent to the convexity of the function $x\mapsto T_f(x)+\kappa \frac{x^2}{2}$.
Now observe that
\[
1=\int e^f \,d\gamma_n = \int e^y \,d\mu_f(y) = \int e^{T_f(x)}\,d\gamma_1(x).
\]
Applying Lemma \ref{foot} to the function $T_f$ in dimension $1$, one concludes that 
\[
T_f(x) \leq \frac{1}{2}x^2 + \frac{1}{2}\log(1+\kappa),\qquad \forall x \in \R. 
\]
This is equivalent to
\[
\Phi(x) \leq F_f\left(\frac{1}{2}x^2 + \frac{1}{2}\log(1+\kappa)\right)
\]
and thus
\[
F_f(u) \geq \Phi\left(\sqrt{2u-\log(1+\kappa)}\right),\qquad \forall u\geq \frac{1}{2}\log(1+\kappa)
\]
or in other words,
\[
\gamma_n(\{f>u\}) \leq \overline{\Phi}\left(\sqrt{2u-\log(1+\kappa)}\right),\qquad \forall u\geq \frac{1}{2}\log(1+\kappa)
\]
\endproof
\medskip

\proof[Second proof of Theorem \ref{main-result}]
Suppose that $f : \R^n \to \R$ is convex and such that $\int e^f\,d\gamma_n=1$. Then according to Lemma \ref{lem:g}, the function $\Phi^{-1}\circ F_f = T_f^{-1}$ is concave. Being also non-decreasing, its inverse $T_f$ is convex. Applying Proposition \ref{prop-Tsemi} with $\kappa=0$ completes the proof. 
\endproof

In  view of Proposition \ref{prop-Tsemi}, a natural conjecture would be the following:

\medskip

\noindent\textbf{Conjecture.} \textit{There exists a function $\kappa : [0,\infty) \to [0,\infty)$ such that if $f: \R^n \to \R$ is a smooth function such that $\mathrm{Hess}\,f \geq -\beta \mathrm{Id}$, $\beta \geq0$, then the map $T_f$ is $\kappa(\beta)$-semiconvex on $\R$.}

\medskip

If this conjecture was true, then one would recover completely Eldan-Lee-Lehec result \eqref{eq:start}. Besides the convex case, let us observe that the conjecture is obviously true in dimension $1$ for \emph{non-decreasing} functions $f$. Indeed, $f$ is clearly a transport map between $\gamma_1$ and $\mu_f$.  Being non-decreasing, $f$ is necessarily the monotone rearrangement map, that is to say : $f = T_f$. Since $f$ is $\kappa$-semiconvex, then so is $T_f.$

Unfortunately, this probably too naive conjecture turns out to be false in general. As explained to us by R. van Handel, the presence of local minimizers for $f$ breaks down the semi-convexity of $T_f.$
Let us illustrate this in dimension $1$.
Consider a function $f:\R \to \R$ of class $\mathcal{C}^1$ such that $f'(x)$ vanishes only at a finite number of points and such that there is some point $x_o \in \R$ and $\eta >0$ such that $f'(x_o)=0$, $f'(x)<0$ on $[x_o-\eta,x_o[$ and $f'(x_o)>0$ on $]x_o,x_o+\eta].$ Denoting by $t_o = f(x_o)$, we assume that $\inf_\R f < t_o$, that is to say, $f$ only presents a local minimizer at $x_o.$ Let us further assume that there are some $\alpha_o,\beta_o>0$ and some positive integer $N$ such that, for all $t_o-\alpha_o\leq t<t_o$, 
\[
\mathrm{Card} \{ x \in \R : f(x)=t\} \leq N
\]
and $|f'(x)|\geq \beta_o$ for all $x$ such that $t_o-\alpha_o \leq f(x)<t_o$. 
\medskip

\textbf{Claim.} There is no $\lambda \geq 0$ for which the map $T:=T_f$ is $\lambda$-semi-convex. 
\medskip

It is not difficult to exhibit semi-convex functions $f$ enjying the assumptions above, which disclaim the conjecture.

\proof[Proof of the Claim.]
First let us remark that if $T$ was $\lambda$-semi-convex for some $\lambda\geq 0$, then the map $x\mapsto T(x) + \frac{\lambda}{2}x^2$ would be convex, and so would admit finite left and right derivatives everywhere. Moreover for a convex function the left derivative at some point is always less than or equal to the right derivative at this same point. So the $\lambda$-semi-convexity of $T$ would in particular imply that
\[
T_-'(x) \leq T'_+(x),\qquad \forall x \in \R.
\]
We are going to show that $T_-'(u_o) > T'_+(u_o)$ for some $u_o \in \R$ which will prove the claim. Since, denoting $F:=F_f$, 
\[
T'_{\pm}(u) = \frac{\varphi(u)}{F'_\pm \circ T(u)},
\]
at every point $u\in \R$ where the derivative exists, one conludes that it is enough to show that 
\[
F'_{-}(t_o)<F'_{+}(t_o)
\]
to have the desired inequality at $u_o=T^{-1}(t_o)$. Note that $|T^{-1}(t_o)| <\infty$ because $\mu_f((t_o,+\infty))=\gamma_1((T^{-1}(t_o),+\infty))>0$ and $\mu_f((\infty,t_o))=\gamma_1((-\infty,T^{-1}(t_o)))>0$, as easily follows from our assumptions.

According to the one dimensional general change of variable formula, the probability measure $\mu_f$ admits the following density
\[
h(t) = \sum_{x \in \{f=t\}} \frac{\varphi(x)}{|f'(x)|},\qquad  t \in \R, 
\]
where $\varphi(x) = \frac{1}{\sqrt{2\pi}} e^{-x^2/2}$, $x \in \R.$
Define $\varepsilon_o = \max_{[x_o-\eta, x_o+\eta]}f-t_o>0$ ; then, for $h< \varepsilon_o$, it holds

\[
F(t_o+h)-F(t_o) = \int_{t_o}^{t_o+h} h(t)\,dt  \geq h\frac{m}{M(h)},
\]
where 
\[
m = \inf_{[x_o-\eta, x_o+\eta]}\varphi
\]
 and 
\[
M(h) = \sup \left\{ |f'(x)| : x \in [x_o-\eta, x_o+\eta], f(x) \in [t_o,t_o+h]\right\}.
\]
It is easily seen that $M(h)\to 0$ as $h$ to $0^+$, which implies that $F'_+(t_o)=+\infty.$ Now let us consider the left derivative. Let us note that one can assume without loss of generality that the left derivative exists at $t_o$, since otherwise the function $T$ would clearly not be semi-convex. 
For any $h>0$, it holds
\[
F(t_o)-F(t_o-h) = \int_{t_o-h}^{t_o} h(t)\,dt \leq h\frac{N}{\sqrt{2\pi}\beta_o}
\]
and so $F'_-(t_o) < +\infty$, which completes the proof of the claim.
\endproof

\bibliographystyle{plain}

\end{document}